\providecommand{\U}[1]{\protect\rule{.1in}{.1in}}
\DeclareMathSymbol{\subsetneqq}{\mathbin}{AMSb}{36}
\theoremstyle{plain}
\numberwithin{equation}{section}
\newtheorem{theorem}{Theorem}[section]
\newtheorem{lemma}{Lemma}[section]
\newtheorem{definition}{Definition}[section]
\newtheorem{remark}{Remark}[section]
\begin{document}
\title[On The Strong Convergence of The Gradient Projection Algorithm ]
{On The Strong Convergence of The Gradient Projection Algorithm with Tikhonov regularizing term}%
\author{Ramzi May}%
\address{Mathematics and Statistics Department, College of Science, King Faisal University, P.O. 380 Ahsaa 31982, Kingdom of Saudi Arabia}
\email{rmay@kfu.edu.sa}
\keywords{The gradient projection method, optimization, Asymptotic behavior,
differential equations, convex functions, Hilbert spaces, weak and strong
convergence.}
\vskip 0.2cm
\date{2 October, 2019}

\begin{abstract} We investigate the strong and the weak convergence
properties of the following  gradient projection algorithm with Tikhonov
regularizing term
\[
x_{n+1}=P_{Q}(x_{n}-\gamma _{n}\nabla f(x_{n})-\gamma _{n}\alpha _{n}\nabla
\phi (x_{n})),
\]
where $P_{Q}$ is the projection operator from a Hilbert space $\mathcal{H}$
onto a given nonempty, closed and convex subset $Q,$ $f:\mathcal{H}%
\rightarrow \mathbb{R}$ a regular convex function, $\phi :\mathcal{H}%
\rightarrow \mathbb{R}$ a regular strongly convex function, and $\gamma _{n}$
and $\alpha _{n}$ are positive real numbers. Following a Lyuapunov approach
inspired essentially from the paper [Comminetti R,
Peypouquet J Sorin S. Strong asymptotic convergence of evolution equations
governed by maximal monotone operators with Tikhonov regularization. J.
Differential Equations. (2001); 245:3753-3763], we establish the strong
convergence of $(x_{n})_{n}$ to a particular minimizer $x^{\ast }$ of $f$ on
$Q$ under some simple and natural conditions on the objective function $f$\
and the sequences $(\gamma _{n})_{n}$ and $(\alpha _{n})_{n}$.
\end{abstract}
\maketitle

\section{Introduction and main result}

Throughout this paper, $\mathcal{H}$\ is a given real Hilbert space endowed
with the inner product $\langle .,.\rangle $ and the associated norm $%
\left\Vert .\right\Vert ,$ $Q$ is a nonempty, closed and convex subset of $%
\mathcal{H}$ and $f:\mathcal{H}\rightarrow \mathbb{R}$ is a $C^{1}$\ convex
such that its gradient $\nabla f$ is $L_{f,Q}$-Lipschitz continuous on $Q$, i.e. there exits a constant $L_{f,Q}>0$ such that
\begin{equation}
\left\Vert \nabla f(x)-\nabla f(y)\right\Vert \leq L_{f,Q}\left\Vert
x-y\right\Vert ,~\forall x,y\in Q.  \label{C1}
\end{equation}%
We consider the following constrained convex minimization problem:
\begin{equation}
\text{min}\{f(x) : x\in Q\}.%
\tag{P}
\end{equation}%
We assume that (P) has at least one solution and we denote by $S_{f,Q}$ the
set of its solutions:%
\[
S_{f,Q}=\{x\in Q:f(x)=f_{Q}^{\ast }\equiv \min_{Q}f\}.
\]%
A powerful algorithm for solving numerically the problem (P) is the well
known Gradient Projection Method ((GP) for a short) which was introduced separately by
Goldestein in 1964 \cite{Gol} and Levitin and Polyak in 1966 \cite{LPo}. This algorithm is defined
recursively as follows:%
\begin{equation}
\left\{
\begin{array}{l}
x_{0}\in Q \\
x_{n+1}=P_{Q}(x_{n}-\gamma _{n}\nabla f(x_{n})),%
\end{array}%
\right.   \tag{GP}
\end{equation}%
where $P_{Q}$ denotes the projection from $\mathcal{H}$\ onto $Q$ and
$(\gamma _{n})$ is a given sequence of positive real numbers. It is known
(see for instance \cite{Xu}) that if $(\gamma _{n})$ satisfies the condition%
\begin{equation}
0<\lim \inf_{n\rightarrow +\infty }\gamma _{n}\leq \lim \sup_{n\rightarrow
+\infty }\gamma _{n}<\frac{2}{L_{f,Q}},  \label{C2}
\end{equation}%
then the sequence $(x_{n})_{n}$ generated by the algorithm (GP) converges
weakly to some element $x_{\infty }$ of $S_{f,Q}.$

Antipin \cite{Ant} studied the continuous version of (GP). He established that any
trajectory $x(t)$ of the the following dynamical system%
\begin{equation}
\left\{
\begin{array}{l}
x^{\prime }(t)+x(t)=P_{Q}(x(t)-\gamma \nabla f((t))), \\
x(0)=x_{0}%
\end{array}%
\right.   \tag{CGP}
\end{equation}%
where $\gamma >0$ and $x_{0}\in Q$, converges weakly as $t$ goes to $+\infty
$ to some minimizer of $f$ over $Q.$ Moreover, he proved that
\[
f(x(t))-f_{Q}^{\ast }\leq \frac{C}{1+t},~\forall t\geq 0,
\]%
for some constant $C>0.$ These results have been improved in \cite{May}
where the case of (CGP) with $\gamma =\gamma (t)$ has been investigated.

In the general case, the convergence of the sequence $(x_{n})$ for the
discreet algorithm (GP) and the trajectory $x(t)$ for the continuous
dynamical system (CGP) are only weak (see \cite{Xu} and \cite{Bai}) and the
corresponding limits are an undefined minimizers of $f$ over $Q$ which may
depend on the initial data $x_{0}.$ To overcome these two weakness many
modifications of the algorithm (GP) and its continuous version (CGP) are
proposed \cite{ACo,Bol,Xu,YXu,CDe,YKJY,YLW}. For instance, in 2002, J. Bolte \cite{Bol} considered the
following dynamical system%
\begin{equation}
\left\{
\begin{array}{l}
x^{\prime }(t)+x(t)=P_{Q}(x(t)-\gamma \nabla f(x(t))-\varepsilon (t)x(t)), \\
x(0) =x_{0},%
\end{array}%
\right.   \tag{CGP$_{\varepsilon }$}
\end{equation}%
where $x_{0}\in Q$ and $\varepsilon :[0,+\infty \lbrack \rightarrow \lbrack 0,+\infty \lbrack
$ is a nonincreasing function converging to zero. He proved hat if $\int_{0}^{+\infty }\varepsilon (t)=+\infty ,\varepsilon
^{\prime }(t)$ is bounded and converges to zero, then every trajectory $x(t)$
of (CGP)$_{\varepsilon }$ converges strongly toward the element of minimal
norm of $S_{f,Q}$. In 2011, H.K. Xu \cite{Xu} studied the asymptotic properties of the
discreet version of (CGP)$_{\varepsilon }$. Precisely, he considered the
following algorithm:
\begin{equation}
x_{n+1}=P_{Q}(x_{n}-\gamma _{n}\nabla f(x_{n})-\gamma _{n}\alpha _{n}x_{n}),
\tag{GP$_{\varepsilon }$}
\end{equation}%
where $(\gamma _{n})_{n}$ and $(\alpha _{n})_{n}$ are nonnegative real
sequences. He established the following strong convergence result.

\begin{theorem}\cite[Hong-Kun Xu]{Xu}
\label{Th0}
Assume that:
\begin{enumerate}
\item[(i)] $0<\gamma _{n}\leq \frac{\alpha _{n}}{(L_{f,Q}+\alpha _{n})^{2}}$
for all $n;$

\item[(ii)] $\alpha _{n}\rightarrow 0$ as $n\rightarrow +\infty ;$

\item[(iii)] $\sum_{n=1}^{+\infty }\alpha _{n}\gamma _{n}=+\infty ;$

\item[(iv)] $\left( \left\vert \gamma _{n}-\gamma _{n-1}\right\vert
+\left\vert \alpha _{n}\gamma _{n}-\alpha _{n-1}\gamma _{n-1}\right\vert
\right) /(\alpha _{n}\gamma _{n})^{2}\rightarrow 0$ as $n\rightarrow +\infty
.$
\end{enumerate}
Then every sequence $(x_{n})_{n}$ generated by the algorithm (GP
)$_{\varepsilon }$ converges strongly to the element of minimal norm of $%
S_{f,Q}.$
\end{theorem}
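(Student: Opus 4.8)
The plan is to recognize the iteration as a genuine gradient-projection step for the Tikhonov-regularized objective and to exploit the resulting contractivity. Setting $f_{n}:=f+\tfrac{\alpha _{n}}{2}\left\Vert \cdot \right\Vert ^{2}$, one has $\nabla f_{n}(x)=\nabla f(x)+\alpha _{n}x$, so that (GP$_{\varepsilon }$) reads $x_{n+1}=V_{n}(x_{n})$ with $V_{n}(x):=P_{Q}(x-\gamma _{n}\nabla f_{n}(x))$. On $Q$ the operator $\nabla f_{n}$ is $\alpha _{n}$-strongly monotone and $(L_{f,Q}+\alpha _{n})$-Lipschitz; combining the nonexpansiveness of $P_{Q}$ with the elementary expansion of $\left\Vert x-y-\gamma _{n}(\nabla f_{n}(x)-\nabla f_{n}(y))\right\Vert ^{2}$ and hypothesis (i), which gives $\gamma _{n}^{2}(L_{f,Q}+\alpha _{n})^{2}\leq \gamma _{n}\alpha _{n}$, I would obtain
\[
\left\Vert V_{n}(x)-V_{n}(y)\right\Vert ^{2}\leq \left( 1-2\gamma _{n}\alpha _{n}+\gamma _{n}^{2}(L_{f,Q}+\alpha _{n})^{2}\right) \left\Vert x-y\right\Vert ^{2}\leq (1-\gamma _{n}\alpha _{n})\left\Vert x-y\right\Vert ^{2}
\]
for all $x,y\in Q$, hence $\left\Vert V_{n}(x)-V_{n}(y)\right\Vert \leq (1-\tfrac{1}{2}\gamma _{n}\alpha _{n})\left\Vert x-y\right\Vert $. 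Thus each $V_{n}$ is a contraction of $Q$ and admits a unique fixed point $y_{n}\in Q$, which is precisely the minimizer of the strongly convex function $f_{n}$ over $Q$.

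Next I would identify the limit of the Tikhonov path $(y_{n})$. Comparing $y_{n}$ with any $p\in S_{f,Q}$ through $f(y_{n})+\tfrac{\alpha _{n}}{2}\left\Vert y_{n}\right\Vert ^{2}\leq f(p)+\tfrac{\alpha _{n}}{2}\left\Vert p\right\Vert ^{2}$ yields $\left\Vert y_{n}\right\Vert \leq \left\Vert p\right\Vert $ together with $f(y_{n})\rightarrow f_{Q}^{\ast }$. Weak lower semicontinuity of $f$ then forces every weak cluster point of $(y_{n})$ to lie in $S_{f,Q}$ with norm at most $\left\Vert x^{\ast }\right\Vert $, where $x^{\ast }$ denotes the minimal-norm element of $S_{f,Q}$; by uniqueness of the minimal-norm element all cluster points coincide with $x^{\ast }$, so $y_{n}\rightharpoonup x^{\ast }$ and $\left\Vert y_{n}\right\Vert \rightarrow \left\Vert x^{\ast }\right\Vert $, whence $y_{n}\rightarrow x^{\ast }$ strongly. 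In particular $(y_{n})$ and $(\nabla f(y_{n}))$ are bounded.

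The heart of the argument is to control the variation $\left\Vert y_{n}-y_{n-1}\right\Vert $ and to feed it into a one-step contraction estimate. Writing $y_{n}=V_{n}(y_{n})$ and $y_{n-1}=V_{n-1}(y_{n-1})$, the contraction bound of the first step together with $\left\Vert V_{n}(y_{n-1})-V_{n-1}(y_{n-1})\right\Vert \leq \left\vert \gamma _{n}-\gamma _{n-1}\right\vert \left\Vert \nabla f(y_{n-1})\right\Vert +\left\vert \gamma _{n}\alpha _{n}-\gamma _{n-1}\alpha _{n-1}\right\vert \left\Vert y_{n-1}\right\Vert $ leads, after absorbing the contractive term, to
\[
\left\Vert y_{n}-y_{n-1}\right\Vert \leq \frac{2M}{\gamma _{n}\alpha _{n}}\big( \left\vert \gamma _{n}-\gamma _{n-1}\right\vert +\left\vert \gamma _{n}\alpha _{n}-\gamma _{n-1}\alpha _{n-1}\right\vert \big) ,
\]
where $M:=\sup _{n}\big( \left\Vert \nabla f(y_{n-1})\right\Vert +\left\Vert y_{n-1}\right\Vert \big) <\infty $ by the second step. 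Consequently hypothesis (iv) is exactly what makes $\left\Vert y_{n}-y_{n-1}\right\Vert /(\gamma _{n}\alpha _{n})\rightarrow 0$. Setting $u_{n}:=\left\Vert x_{n}-y_{n-1}\right\Vert $ and using the contraction once more,
\[
u_{n+1}=\left\Vert V_{n}(x_{n})-V_{n}(y_{n})\right\Vert \leq (1-\tfrac{1}{2}\gamma _{n}\alpha _{n})\big( u_{n}+\left\Vert y_{n}-y_{n-1}\right\Vert \big) ,
\]
so that $u_{n+1}\leq (1-\tfrac{1}{2}\gamma _{n}\alpha _{n})u_{n}+\left\Vert y_{n}-y_{n-1}\right\Vert $.

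Finally I would invoke the standard real-sequence lemma: if $u_{n+1}\leq (1-\lambda _{n})u_{n}+\sigma _{n}$ with $\lambda _{n}\in (0,1)$, $\sum \lambda _{n}=+\infty $ and $\sigma _{n}/\lambda _{n}\rightarrow 0$, then $u_{n}\rightarrow 0$. Here $\lambda _{n}=\tfrac{1}{2}\gamma _{n}\alpha _{n}$, whose series diverges by (iii), while $\sigma _{n}=\left\Vert y_{n}-y_{n-1}\right\Vert $ satisfies $\sigma _{n}/\lambda _{n}\rightarrow 0$ by the previous step. Thus $u_{n}=\left\Vert x_{n}-y_{n-1}\right\Vert \rightarrow 0$, and since $y_{n-1}\rightarrow x^{\ast }$ we conclude $x_{n}\rightarrow x^{\ast }$ strongly (boundedness of $(x_{n})$ being obtained en route as $u_{n}\rightarrow 0$). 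I expect the main obstacle to be the variation estimate of the third paragraph: one must produce a bound on $\left\Vert y_{n}-y_{n-1}\right\Vert $ whose denominator, after division by $\lambda _{n}$, is precisely $(\gamma _{n}\alpha _{n})^{2}$, so that hypothesis (iv)\ in its exact form, and not merely a condition on $(\alpha _{n})$ alone, is the natural requirement. Securing the uniform constant $M$ relies on the boundedness of the Tikhonov path established in the second paragraph, so the two middle steps are genuinely intertwined.
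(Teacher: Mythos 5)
Your proof is correct, but there is nothing in the paper to compare it against line by line: Theorem \ref{Th0} is quoted from Xu's article \cite{Xu} as the benchmark result, and the paper never proves it --- its own work goes into the stronger Theorem \ref{Th}. Your argument is essentially Xu's original route: hypothesis (i) makes $V_{n}(x):=P_{Q}(x-\gamma _{n}\nabla f(x)-\gamma _{n}\alpha _{n}x)$ a $(1-\tfrac{1}{2}\gamma _{n}\alpha _{n})$-contraction of $Q$; its fixed point $y_{n}$ is the minimizer of $f+\tfrac{\alpha _{n}}{2}\Vert \cdot \Vert ^{2}$ over $Q$; the Tikhonov path $(y_{n})$ converges strongly to the minimal-norm solution $x^{\ast }$; the perturbation bound $\Vert y_{n}-y_{n-1}\Vert \leq \tfrac{2M}{\gamma _{n}\alpha _{n}}\bigl( \vert \gamma _{n}-\gamma _{n-1}\vert +\vert \gamma _{n}\alpha _{n}-\gamma _{n-1}\alpha _{n-1}\vert \bigr)$ turns hypothesis (iv) into exactly $\sigma _{n}/\lambda _{n}\rightarrow 0$; and Lemma \ref{Lem5} (with $\varepsilon _{n}=\tfrac{1}{2}\gamma _{n}\alpha _{n}$, $r_{n}=\sigma _{n}/\lambda _{n}$, $\delta _{n}=0$, using (iii)) closes the argument. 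I checked the absorption step, the index bookkeeping in $u_{n}=\Vert x_{n}-y_{n-1}\Vert $, and the fact that $\gamma _{n}\alpha _{n}<1$ under (i), so $\sqrt{1-\gamma _{n}\alpha _{n}}\leq 1-\tfrac{1}{2}\gamma _{n}\alpha _{n}$ is legitimate; all steps are sound. It is worth contrasting this with how the paper proves its own Theorem \ref{Th}(ii), which subsumes this statement: there too the regularized minimizers $y_{n}$ over $Q$ appear, but the proof is a Lyapunov argument in the spirit of \cite{CPS} which never estimates the path variation $\Vert y_{n}-y_{n-1}\Vert $. Instead it uses the monotonicity of $(\alpha _{n})$, the convergence $\phi (y_{n})\rightarrow \phi (y^{\ast })$, and strong convexity to reach a recursion of the form $\Vert x_{n+1}-y^{\ast }\Vert ^{2}\leq (1-m\gamma _{n}\alpha _{n})\Vert x_{n}-y^{\ast }\Vert ^{2}+\gamma _{n}L_{n}\Vert x_{n+1}-x_{n}\Vert ^{2}+2\gamma _{n}\alpha _{n}\bigl( \phi (y^{\ast })-\phi (y_{n})\bigr) $, whose error terms are respectively summable and $o(\gamma _{n}\alpha _{n})$, and then applies the same Lemma \ref{Lem5}. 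Your route buys a self-contained proof of Xu's theorem exactly as stated, and it exposes why (i) and (iv) are needed there: (i) to get contractivity, (iv) to tame $\Vert y_{n}-y_{n-1}\Vert $. The paper's route buys the removal of both of those hypotheses, which is precisely the improvement it claims over Theorem \ref{Th0}.
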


The main objective of the paper is to improve this theorem by proving a
convergence result for the discreet algorithm (GP)$_{\varepsilon }$ very similar to the
result of Bolte concerning the asymptotic behavior of the trajectories of
the continuous dynamical system (CGP). Indeed, we prove that, if $0<\lim
\sup \gamma _{n}<\frac{2}{L_{f,Q}}$, the sequence $\alpha _{n}$ decreases and converges to zero and  $%
\sum_{n=1}^{+\infty }\gamma _{n}\alpha_{n}=+\infty $, then the sequences $%
(x_{n})_{n}$ generated by the algorithm (GP$_{\varepsilon }$) converge
strongly to the element of minimal norm of $S_{f,Q}.$ Moreover, in the case $%
\sum_{n=1}^{+\infty }\gamma _{n}\alpha_{n}<\infty ,$ we establish a result which
improves the weak convergence criteria (\ref{C2}) for the algorithm (GP).
Precisely, our main result states as follows.

\begin{theorem}
\label{Th}
Let $\phi :\mathcal{H}\rightarrow \mathbb{R}$ be a differentiable
convex function such that it is bounded from below on $Q$ and its gradient function $\nabla\phi$ is $L_{\phi,Q}$-Lipschitz continuous on $Q$. Let $(\gamma _{n})_{n}$ and $(\alpha _{n})_{n}$ be two
sequences of nonnegative real numbers such that $0<\lim \sup \gamma _{n}<%
\frac{2}{L_{f,Q}}$ and $(\alpha _{n})$ decreases and converges to zero. Let $(x_{n})_{n}$ be a sequence
generated by the algorithm%
\begin{equation}
x_{n+1}=P_{Q}(x_{n}-\gamma _{n}\nabla f(x_{n})-\gamma _{n}\alpha _{n}\nabla
\phi (x_{n})).  \tag{GGP$_{\varepsilon }$}
\end{equation}

\begin{enumerate}
\item[(i)] If $\sum_{n=1}^{+\infty }\gamma _{n}=+\infty $ and $%
\sum_{n=1}^{+\infty }\gamma _{n}\alpha _{n}<+\infty ,$ then $(x_{n})$
converges weakly to some element of $S_{f,Q}.$

\item[(ii)] If $\phi$ is strongly convex and $\sum_{n=1}^{+\infty }\gamma _{n}\alpha _{n}=+\infty ,$ then $%
(x_{n})$ converges strongly to the unique minimizer $y^{\ast }$ of $\phi $
over the subset $S_{f,Q}.$
\end{enumerate}
\end{theorem}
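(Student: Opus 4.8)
The plan is to treat the algorithm $(\mathrm{GGP}_\varepsilon)$ as a perturbed version of the basic gradient projection method and to run a Lyapunov argument in the spirit of Comminetti--Peypouquet--Sorin. First I would record the standard nonexpansiveness and firm-nonexpansiveness of $P_Q$, together with the elementary inequality
\[
\langle x_{n+1}-P_Q(u),\, x_n-\gamma_n\nabla f(x_n)-\gamma_n\alpha_n\nabla\phi(x_n)-u\rangle \ge 0
\]
for every $u\in Q$, which follows from the variational characterization of the projection. Applying this with $u=p$ an arbitrary point of $S_{f,Q}$ (for part (i)) or $u=y^\ast$ (for part (ii)), and using the cocoercivity of $\nabla f$ (Baillon--Haddad) under the hypothesis $\limsup\gamma_n<2/L_{f,Q}$, I expect to derive a recursive estimate of the form
\[
\|x_{n+1}-u\|^2 \le \|x_n-u\|^2 - \gamma_n\bigl(2-\gamma_n L_{f,Q}-\eta_n\bigr)\|\,\text{(something)}\,\|^2 + \gamma_n\alpha_n\langle -\nabla\phi(x_n),\,x_n-u\rangle + (\text{lower order}),
\]
where the cross term involving $\nabla\phi$ is controlled using convexity of $\phi$ and its boundedness below on $Q$.

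For part (i), the strategy is the classical Opial/Fejér argument. The summability $\sum\gamma_n\alpha_n<\infty$ makes the $\nabla\phi$-perturbation an $\ell^1$ error term, so after absorbing it one gets quasi-Fejér monotonicity of $(\|x_n-p\|)$ with respect to $S_{f,Q}$; hence $(\|x_n-p\|)$ converges for each $p\in S_{f,Q}$ and $(x_n)$ is bounded. Summing the recursion and using $\limsup\gamma_n<2/L_{f,Q}$ forces the residual term to be summable, which combined with $\sum\gamma_n=+\infty$ yields $\liminf$ of the relevant quantity $=0$; a subsequence argument plus lower semicontinuity of $f$ and the demiclosedness/variational-inequality characterization of $S_{f,Q}$ then shows every weak cluster point lies in $S_{f,Q}$. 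Opial's lemma closes the argument. I would also need to verify that $x_{n+1}-x_n\to 0$ (or an analogous asymptotic regularity), which follows from the nonexpansiveness of the update map and the step-size bound.

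For part (ii) — the genuinely new and harder case — the plan is to introduce the Tikhonov-type curve: for each $n$ let $z_n$ be the unique minimizer over $S_{f,Q}$ of $x\mapsto f(x)+\alpha_n\phi(x)$, or equivalently work with $y^\ast=\arg\min_{S_{f,Q}}\phi$ directly, and estimate $\|x_n-y^\ast\|^2$. Strong convexity of $\phi$, say with modulus $\mu>0$, gives $\langle -\nabla\phi(x_n),\,x_n-y^\ast\rangle \le \phi(y^\ast)-\phi(x_n)-\tfrac{\mu}{2}\|x_n-y^\ast\|^2$, so the cross term in the recursion contributes a genuinely negative $-\tfrac{\mu}{2}\gamma_n\alpha_n\|x_n-y^\ast\|^2$ plus the sign-indefinite piece $\gamma_n\alpha_n(\phi(y^\ast)-\phi(x_n))$. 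The key obstacle is controlling this last piece: since $y^\ast$ minimizes $\phi$ only over $S_{f,Q}$ and not over $Q$, the quantity $\phi(y^\ast)-\phi(x_n)$ need not have a good sign, and one must show that along the iteration $f(x_n)\to f_Q^\ast$ fast enough (indeed in the "right" averaged sense against the weights $\gamma_n\alpha_n$) that $x_n$ is asymptotically feasible for the inner problem, so that $\limsup(\phi(y^\ast)-\phi(x_n))\le 0$ effectively. This is exactly the delicate point handled in the CPS framework via the monotonicity of the regularization path; here the discreteness and the variable $\gamma_n$ add bookkeeping. Once that is in place, I would apply a discrete Gronwall/Chung-type lemma: a recursion $a_{n+1}\le (1-\mu\gamma_n\alpha_n)a_n + o(\gamma_n\alpha_n)+\varepsilon_n$ with $\sum\gamma_n\alpha_n=+\infty$ forces $a_n=\|x_n-y^\ast\|^2\to 0$, giving strong convergence. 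I expect the main difficulty to be precisely the interplay between the optimality gap $f(x_n)-f_Q^\ast$ and the regularizing weights — establishing that the inner objective is asymptotically minimized along the outer iteration — while the cocoercivity estimates and the final Gronwall step are comparatively routine.
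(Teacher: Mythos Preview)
Your plan for part (i) is essentially the paper's argument (the paper uses the descent lemma on the combined function $\Phi_n=f+\alpha_n(\phi-\phi^\ast)$ rather than Baillon--Haddad cocoercivity on $f$ alone, but this is a cosmetic difference; both yield quasi-Fej\'er monotonicity and the Opial conclusion).

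For part (ii) there is a genuine gap. You correctly identify the sign-indefinite residual $\gamma_n\alpha_n(\phi(y^\ast)-\phi(x_n))$ as the obstacle, but your proposed resolution does not work. First, your ``Tikhonov curve'' $z_n=\arg\min_{S_{f,Q}}(f+\alpha_n\phi)$ is just $y^\ast$ for every $n$ (since $f$ is constant on $S_{f,Q}$), so it carries no information. Second, the fallback plan --- show $f(x_n)\to f_Q^\ast$ so that $x_n$ is ``asymptotically feasible'' and hence $\limsup(\phi(y^\ast)-\phi(x_n))\le 0$ --- is circular: in the regime $\sum\gamma_n\alpha_n=+\infty$ you have no a~priori control on $f(x_n)-f_Q^\ast$, and even if you did, weak lower semicontinuity of $\phi$ would only give $\liminf\phi(x_{n_k})\ge\phi(\bar x)$ along weakly convergent subsequences, not the full-sequence statement $r_n\to 0$ that the discrete Gronwall lemma requires.

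The missing idea is to take the Tikhonov path over $Q$, not over $S_{f,Q}$: set $y_n:=\arg\min_Q\Phi_n$. Working with $\Phi_n$ as a whole (using its $m\alpha_n$-strong convexity for the contraction term and the descent lemma for the increment), the recursion produces the error term $2\gamma_n\bigl(\Phi_n(y^\ast)-\Phi_n(x_{n+1})\bigr)$ rather than $\gamma_n\alpha_n(\phi(y^\ast)-\phi(x_n))$. Now the single inequality $\Phi_n(x_{n+1})\ge\Phi_n(y_n)$ (valid because $x_{n+1}\in Q$) replaces the uncontrolled iterate by the regularization path, and since $f(y^\ast)\le f(y_n)$ one gets
\[
\Phi_n(y^\ast)-\Phi_n(x_{n+1})\ \le\ \Phi_n(y^\ast)-\Phi_n(y_n)\ \le\ \alpha_n\bigl(\phi(y^\ast)-\phi(y_n)\bigr).
\]
The classical viscosity argument (boundedness of $(y_n)$, weak cluster points forced into $S_{f,Q}$ and then to $y^\ast$, weak lsc of $\phi$) gives $\phi(y_n)\to\phi(y^\ast)$, so the error term is genuinely $o(\gamma_n\alpha_n)$ and the Gronwall step goes through. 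This replacement of $x_{n+1}$ by $y_n$ is the heart of the CPS mechanism in the discrete setting and is what your outline is missing.
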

\begin{remark}
If we take
\[
\gamma_n=\frac{A}{n^{\gamma}}, \ \alpha_n=\frac{B}{n^{\alpha}},
\]
where $A,B>0$ are absolute constants, then according to the Theorem \ref{Th0} of Xu, the strong convergence of the Algorithm (GP)$_{\varepsilon }$ holds if $0<\alpha<\gamma<1$ and $2\alpha+\gamma<1$; however, Theorem \ref{Th} guaranties the strong convergence of (GP)$_{\varepsilon }$  under the weaker assumptions: $\alpha>0, \gamma\geq0$, and $\alpha+\gamma\leq1$.
\end{remark}
\begin{remark} Theorem \ref{Th} improves \cite[Theorem 3.2]{YLW} where the strong convergence of the algorithm (GP)$_{\varepsilon }$ is established under the following hypothesis: $0<\lim_{n\rightarrow +\infty } \gamma _{n}<%
\frac{2}{L_{f,Q}}$, $\sum_{n=1}^\infty|\gamma_{n+1}-\gamma_n|<\infty$, $\lim_{n\rightarrow +\infty }\alpha_n=0$ and $\sum_{n=1}^\infty\alpha_n=\infty$.
\end{remark}
The rest of the paper is organized as follows. in Section 2, we gather some
general results that will be useful in the proof of our main theorem. The
section 3 is devoted to the proof of the weak convergence property of the
algorithm (GGP)$_{\varepsilon }.$ In the last section, we prove the second
assertion of Theorem concerning the strong convergence property of the
algorithm (GGP)$_{\varepsilon }.$

\section{Some preliminary results}

In this section, we will recall some important results that will be useful
in the proof of the main Theorem \ref{Th}.
Firstly we recall the definition of the operator $P_Q$.
\begin{definition}
For every $x\in \mathcal{H}$, $P_Q(x)$ is the unique element $z\in Q$ which satisfies $\|x-z\|=inf_{y\in Q}\|x-y\|$.
\end{definition}

The following lemma is a well known variational characterization of the
projection operator $P_{Q}$ \cite{CGo}.

\begin{lemma}
\label{Lem1}
Let $x,y\in \mathcal{H}.$ Then $P_{Q}(x)=y$ if and only if $y\in Q$ and $%
\langle y-x,y-v\rangle \leq 0$ for every $v\in Q.$
\end{lemma}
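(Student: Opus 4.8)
The plan is to prove the two implications separately, using the convexity of $Q$ together with the expansion of the squared norm induced by the inner product. Throughout I rely on the fact, built into the definition, that $P_{Q}(x)$ exists and is the \emph{unique} minimizer of $v\mapsto \|x-v\|$ over $Q$.

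For the direct implication, suppose $y=P_{Q}(x)$, so that $y\in Q$ and $\|x-y\|\leq \|x-v\|$ for every $v\in Q$. Fix an arbitrary $v\in Q$. Since $Q$ is convex, the segment $y_{t}:=(1-t)y+tv=y+t(v-y)$ lies in $Q$ for all $t\in[0,1]$. Consider the scalar function
\[
g(t)=\|x-y_{t}\|^{2}=\|x-y\|^{2}-2t\,\langle x-y,v-y\rangle+t^{2}\|v-y\|^{2},
\]
which by the optimality of $y$ attains its minimum over $[0,1]$ at $t=0$. Hence its right derivative at $0$ is nonnegative, i.e. $g'(0)=-2\langle x-y,v-y\rangle\geq 0$, which rearranges to $\langle y-x,y-v\rangle\leq 0$. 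As $v\in Q$ was arbitrary, this is the desired inequality.

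For the converse, suppose $y\in Q$ satisfies $\langle y-x,y-v\rangle\leq 0$ for every $v\in Q$. For any such $v$, expanding along $x-v=(x-y)+(y-v)$ gives
\[
\|x-v\|^{2}=\|x-y\|^{2}+2\langle x-y,y-v\rangle+\|y-v\|^{2}\geq \|x-y\|^{2},
\]
because $\langle x-y,y-v\rangle=-\langle y-x,y-v\rangle\geq 0$ by hypothesis and $\|y-v\|^{2}\geq 0$. Thus $y$ minimizes the distance to $x$ over $Q$, and by the uniqueness asserted in the definition of $P_{Q}$ we conclude $y=P_{Q}(x)$.

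There is no serious obstacle here; the only point deserving care is the direct implication, where one must pass from the global minimality of $y$ over $Q$ to a pointwise inequality. The convexity of $Q$ is exactly what makes the one-parameter reduction $t\mapsto y_{t}$ admissible, and using the nonnegativity of the right derivative at the boundary point $t=0$ (rather than the vanishing of a two-sided derivative at an interior point) is what delivers the inequality in the correct direction.
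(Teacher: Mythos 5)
Your proof is correct. Note that the paper itself offers no proof of this lemma: it is stated as a well-known fact and attributed to Cheney and Goldstein \cite{CGo}, so there is no in-paper argument to compare against. What you give is the standard variational argument, and it is complete: the forward direction correctly exploits convexity of $Q$ to reduce to the one-parameter family $y_t=y+t(v-y)$ and uses the nonnegativity of the one-sided derivative $g'(0^+)$ at the boundary minimizer (the right move, since $t=0$ is an endpoint of $[0,1]$), and the converse correctly expands $\|x-v\|^2$ along $x-v=(x-y)+(y-v)$ and invokes the uniqueness built into the paper's definition of $P_Q$ to conclude $y=P_Q(x)$.
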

For more details about the properties of the projection operator $P_Q$, we refer the reader to the book of J. Peypouquet \cite{Pey}.
\par\noindent
The second result is a very important weak convergence criteria discovered
independently and almost at the same time by Opial \cite{Opi} and Polyak \cite{Pol}.
\begin{lemma}
[Polyak-Opial's lemma] Let $(x_n)_n$ be a sequence in $\mathcal{H}$. Assume
that there exists a nonempty subset $S$ of $\mathcal{H}$ such that:
\begin{enumerate}
\item[(i)] for every $z\in S,$ $\displaystyle\lim_{n\rightarrow+\infty}\left\Vert
x_n-z\right\Vert $ exists.
\item[(ii)] Every weak cluster point of $(x_n)_n$ belongs to the set $S$,
\end{enumerate}
\noindent Then there exists $x_{\infty}\in S$ such that $(x_n)_n$ converges weakly in $\mathcal{H}$ toward $x_{\infty}$.
\end{lemma}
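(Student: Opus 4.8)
The plan is to establish two facts, namely that the sequence $(x_n)_n$ is weakly sequentially precompact (so weak cluster points exist) and that it has exactly one weak cluster point; the conclusion then follows from the standard fact that a bounded sequence in a Hilbert space possessing a unique weak cluster point converges weakly to it.

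First I would observe that $(x_n)_n$ is bounded. Fixing any $z\in S$, which is possible since $S\neq\emptyset$, hypothesis (i) guarantees that $\lim_n\|x_n-z\|$ exists and is therefore finite, so $(\|x_n-z\|)_n$ is a bounded sequence of real numbers and hence $(x_n)_n$ is bounded in $\mathcal{H}$. Because $\mathcal{H}$ is a Hilbert space, every bounded sequence is weakly sequentially precompact, so $(x_n)_n$ admits at least one weak cluster point, and by (ii) every such cluster point lies in $S$.

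The heart of the argument is to show that this weak cluster point is unique. Suppose $p$ and $q$ are two weak cluster points; by (ii) both belong to $S$, so by (i) the limits $\ell_p=\lim_n\|x_n-p\|^2$ and $\ell_q=\lim_n\|x_n-q\|^2$ both exist. Expanding
\[
\|x_n-p\|^2=\|x_n-q\|^2+2\langle x_n-q,\,q-p\rangle+\|q-p\|^2
\]
shows that the scalar sequence $\langle x_n-q,\,q-p\rangle$ converges, say to some $L\in\mathbb{R}$, with $L=\tfrac12(\ell_p-\ell_q-\|q-p\|^2)$. Now I would evaluate $L$ along two subsequences: along a subsequence $x_{n_k}\rightharpoonup p$ one gets $L=\langle p-q,\,q-p\rangle=-\|q-p\|^2$, whereas along a subsequence $x_{m_k}\rightharpoonup q$ one gets $L=\langle q-q,\,q-p\rangle=0$. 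Since the full sequence converges to the single value $L$, these two subsequential limits must coincide, forcing $\|q-p\|^2=0$, i.e. $p=q$.

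Finally, having shown that $(x_n)_n$ is bounded and possesses a single weak cluster point $x_\infty\in S$, I would invoke the usual compactness argument to conclude weak convergence: if $(x_n)_n$ did not converge weakly to $x_\infty$, there would be a weak neighborhood $V$ of $x_\infty$ and a subsequence of $(x_n)_n$ avoiding $V$; this subsequence, being bounded, would admit a further weakly convergent subsequence whose limit is a weak cluster point of $(x_n)_n$ distinct from $x_\infty$, contradicting uniqueness. Hence $x_n\rightharpoonup x_\infty$ with $x_\infty\in S$. The main obstacle is the uniqueness step, where the cross-term expansion together with the two subsequential evaluations is exactly what pins down $p=q$; the boundedness and the final extraction are routine.
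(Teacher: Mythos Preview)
Your argument is correct and is the standard proof of Opial's lemma: boundedness from (i), weak sequential compactness, the cross-term identity to force uniqueness of the weak cluster point, and the usual sub-subsequence extraction to conclude. The paper itself does not supply a proof of this lemma; it merely states it as a known result with references to Opial and Polyak, so there is no in-paper argument to compare against. Your write-up would serve perfectly well as the omitted proof.
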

The third Lemma is a simple criteria for the convergence of nonnegative real sequences.
\begin{lemma}
\label{Lem3}
Let $(x_n)_n$ be a sequence of nonnegative real number. Assume that there exists a non negative real sequence $(\delta_n)_n$ such that $\sum_{n=1}^{+\infty }\delta _n<\infty $ and, for every $n\in \mathbb{N}$,
$x_{n+1}\leq x_n+\delta_n$. Then the sequence $(x_n)_n$ converges.
\end{lemma}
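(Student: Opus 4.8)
The plan is to reduce the (possibly non-monotone) sequence $(x_n)_n$ to an auxiliary sequence that \emph{is} monotone and bounded below, and then invoke the elementary monotone convergence theorem for real sequences. The device is to absorb the tail of the summable perturbation $(\delta_n)_n$ into the sequence itself, so that the defect $x_{n+1}-x_n\leq\delta_n$ is exactly cancelled.

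First I would record that, since $\sum_{n=1}^{+\infty}\delta_n<\infty$, the tail sums $R_n:=\sum_{k=n}^{+\infty}\delta_k$ are finite and nonnegative, satisfy the telescoping identity $R_n=\delta_n+R_{n+1}$, and tend to $0$ as $n\to+\infty$ (this is precisely the statement that the tails of a convergent series vanish). I then set $y_n:=x_n+R_n$; both summands being nonnegative, $y_n\geq 0$ for every $n$. The key step is to verify that $(y_n)_n$ is nonincreasing: using the hypothesis $x_{n+1}\leq x_n+\delta_n$ together with $R_n=\delta_n+R_{n+1}$,
\[
y_{n+1}=x_{n+1}+R_{n+1}\leq x_n+\delta_n+R_{n+1}=x_n+R_n=y_n .
\]
Hence $(y_n)_n$ is a nonincreasing sequence bounded below by $0$, and therefore converges to some limit $\ell\geq 0$.

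To conclude, I would simply write $x_n=y_n-R_n$ and pass to the limit: since $y_n\to\ell$ and $R_n\to 0$, it follows that $x_n\to\ell$, which is the desired convergence. There is essentially no genuine obstacle in this lemma; the only point requiring a word of care is the well-definedness of the tail sums $R_n$ and the fact that $R_n\to 0$, both of which are immediate consequences of the convergence of $\sum_{n}\delta_n$. Everything else is the standard monotone-convergence argument applied to the shifted sequence $(y_n)_n$, and no further hypotheses (such as boundedness of $(x_n)_n$) need be assumed, since boundedness of $(y_n)_n$, and hence of $(x_n)_n$, is automatic from the construction.
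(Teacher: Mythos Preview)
Your proof is correct and follows exactly the same idea as the paper's: define the shifted sequence $u_n = x_n + \sum_{k=n}^{+\infty}\delta_k$, observe that it is nonincreasing and bounded below by $0$, hence convergent, and recover the convergence of $(x_n)_n$ from the vanishing of the tail sums. The paper records this in one line, whereas you have simply written out the details in full.
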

\begin{proof}
It suffices to notice that the sequence $u_n :=x_n+\sum_{k=n}^{+\infty }\delta _k$ is convergent since it is decreasing and bounded from below.
\end{proof}
The following classical result will be used many times in the proof of Theorem \ref{Th}.
\begin{lemma}
\label{Lem4}
Let $g:H\rightarrow R$ be a differentiable convex function such  that
its gradient $\nabla g$ is $L_{g}$-Lipschitz continuous on the convex subset $Q$.  Then for every $x,y\in Q$ we have%
\[
g(y)\leq g(x)+\langle \nabla g(x),y-x\rangle +\frac{L_{g}}{2}\left\Vert
y-x\right\Vert ^{2}.
\]
\end{lemma}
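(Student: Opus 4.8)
The plan is to reduce the multivariate inequality to a one-dimensional computation along the segment joining $x$ and $y$, and then to control the deviation of $\nabla g$ from its value at $x$ by the Lipschitz hypothesis. First I would fix $x,y\in Q$ and introduce the auxiliary function $h:[0,1]\rightarrow\mathbb{R}$ defined by $h(t)=g(x+t(y-x))$. The convexity of $Q$ guarantees that $x+t(y-x)\in Q$ for every $t\in[0,1]$, so the whole segment on which we differentiate stays inside the region where the $L_g$-Lipschitz estimate for $\nabla g$ is available; this is the one place where convexity of $Q$ is genuinely used. Since $g$ is differentiable, the chain rule gives $h'(t)=\langle\nabla g(x+t(y-x)),y-x\rangle$.

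Next I would apply the fundamental theorem of calculus to write
\[
g(y)-g(x)=h(1)-h(0)=\int_0^1\langle\nabla g(x+t(y-x)),y-x\rangle\,dt.
\]
Subtracting the constant term $\langle\nabla g(x),y-x\rangle=\int_0^1\langle\nabla g(x),y-x\rangle\,dt$ and regrouping yields
\[
g(y)-g(x)-\langle\nabla g(x),y-x\rangle=\int_0^1\langle\nabla g(x+t(y-x))-\nabla g(x),y-x\rangle\,dt.
\]
I would then bound the integrand from above by the Cauchy--Schwarz inequality followed by the $L_g$-Lipschitz continuity of $\nabla g$ on $Q$: the quantity $\left\Vert\nabla g(x+t(y-x))-\nabla g(x)\right\Vert$ is at most $L_g\left\Vert t(y-x)\right\Vert=L_g\,t\left\Vert y-x\right\Vert$, so the integrand does not exceed $L_g\,t\left\Vert y-x\right\Vert^2$.

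Finally, integrating this elementary bound over $[0,1]$ gives $\int_0^1 L_g\,t\left\Vert y-x\right\Vert^2\,dt=\frac{L_g}{2}\left\Vert y-x\right\Vert^2$, which is exactly the claimed inequality. There is no substantial obstacle here: the argument is entirely routine, the only point requiring care being the verification that the entire segment $[x,y]$ lies in $Q$, so that the Lipschitz bound applies pointwise along the path of integration. I note in passing that convexity of $g$ itself is never actually invoked, the conclusion holding for any differentiable function whose gradient is $L_g$-Lipschitz on the convex set $Q$.
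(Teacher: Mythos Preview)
Your proof is correct and follows exactly the same route as the paper: apply the fundamental theorem of calculus along the segment from $x$ to $y$, subtract the linear term, and bound the remaining integral using Cauchy--Schwarz together with the $L_g$-Lipschitz estimate on $\nabla g$ (which is applicable because $Q$ is convex). Your additional remark that the convexity of $g$ is never used is accurate and worth noting.
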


\begin{proof}
Let $x,y\in Q.$ From the fundamental formula of calculus, we have%
\begin{eqnarray*}
g(y) &\leq &g(x)+\int_{0}^{1}\langle \nabla g(x+t(y-x)),y-x\rangle dt \\
&=&g(x)+\langle \nabla g(x),y-x\rangle +\int_{0}^{1}\langle \nabla
g(x+t(y-x))-\nabla g(x),y-x\rangle dt \\
&\leq &g(x)+\langle \nabla g(x),y-x\rangle +\left\Vert y-x\right\Vert
\int_{0}^{1}\left\Vert \nabla g(x+t(y-x))-\nabla g(x)\right\Vert dt.
\end{eqnarray*}%
Since $Q$ is convex, $x+t(y-x)\in Q$ for every $t\in \lbrack 0,1],$ hence
the last inequality implies that
\begin{eqnarray*}
g(y) &\leq &g(x)+\langle \nabla g(x),y-x\rangle +\left\Vert y-x\right\Vert
\int_{0}^{1}L_{g}t\left\Vert y-x\right\Vert dt \\
&= &g(x)+\langle \nabla g(x),y-x\rangle +\frac{L_{g}}{2}\left\Vert
y-x\right\Vert ^{2}.
\end{eqnarray*}
\end{proof}
The last result in this section is a powerful lemma which has been used in many works to prove the strong convergence of variant algorithms related to the fixed point theory of non expansive mappings. A first version of this lemma is firstly given by Bertsekas \cite{Ber}. The following improved version is due to Xu \cite{Xu1}.
\begin{lemma}
\label{Lem5} Let  $(u_{n})_{n},(\varepsilon _{n})_{n},(r_{n})_{n}$ and $(\delta_n)_n$ be three
non negative real sequences such that:
\begin{enumerate}
\item[(1)] $(\varepsilon _{n})_{n}\in \lbrack 0,1]$ and $\sum_{n=0}^{+\infty
}\varepsilon _{n}=+\infty $.
\item[(2)] $r _{n}\rightarrow 0$ as $n\rightarrow +\infty$.
\item[(3)]  $\sum_{n=1}^{+\infty }\delta_n<\infty$.
\item[(4)] For every $n\in\mathbb{N}, u_{n+1}\leq (1-\varepsilon _{n})u_{n}+r_{n}\varepsilon
_{n}+\delta_n$.
\end{enumerate}
Then $u_{n}\rightarrow 0$ as $n\rightarrow +\infty .$
\end{lemma}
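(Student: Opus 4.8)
The plan is to prove the lemma in two stages. First I would absorb the merely summable perturbation $(\delta_n)$ into the other terms so as to reduce to the model recursion $u_{n+1}\le (1-\varepsilon_n)u_n+r_n\varepsilon_n$ with no additive $\delta_n$; then I would exploit the divergence of $\sum\varepsilon_n$ to force the sequence below any prescribed positive threshold. The guiding idea is that the two forcing terms must be treated at different speeds: $r_n\varepsilon_n$ carries the factor $\varepsilon_n$ and can be killed by a shift, whereas $\delta_n$ does not and must instead be folded in through its tail sum.

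To carry out the reduction, I would set $\sigma_n=\sum_{k=n}^{+\infty}\delta_k$, which is well defined and tends to $0$ by hypothesis (3) and satisfies $\sigma_n=\delta_n+\sigma_{n+1}$. Defining $\tilde u_n=u_n+\sigma_n\ge 0$ and substituting hypothesis (4), using $\delta_n+\sigma_{n+1}=\sigma_n$ together with $(1-\varepsilon_n)u_n=(1-\varepsilon_n)\tilde u_n-(1-\varepsilon_n)\sigma_n$, the two $\sigma$-terms combine to $\varepsilon_n\sigma_n$ and yield
\[
\tilde u_{n+1}\le (1-\varepsilon_n)\tilde u_n+(r_n+\sigma_n)\varepsilon_n .
\]
Writing $\tilde r_n=r_n+\sigma_n$, hypothesis (2) gives $\tilde r_n\to 0$, so $(\tilde u_n)$ satisfies the same recursion with $\delta_n$ removed. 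Since $\sigma_n\to 0$, proving $\tilde u_n\to 0$ will give $u_n\to 0$; hence, after renaming, I may assume throughout that $\delta_n\equiv 0$ and $u_{n+1}\le (1-\varepsilon_n)u_n+r_n\varepsilon_n$ with $r_n\to 0$.

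For the reduced recursion I would show $\limsup_n u_n\le\epsilon$ for every $\epsilon>0$. Fixing $\epsilon>0$ and using $r_n\to 0$, I pick $N$ with $r_n\le\epsilon$ for $n\ge N$. The key algebraic step is the shift $w_n=u_n-\epsilon$: for $n\ge N$, using $r_n\le\epsilon$ and $\varepsilon_n\ge 0$,
\[
w_{n+1}=u_{n+1}-\epsilon\le (1-\varepsilon_n)u_n+\varepsilon_n\epsilon-\epsilon=(1-\varepsilon_n)(u_n-\epsilon)=(1-\varepsilon_n)w_n,
\]
so the additive forcing term becomes a pure multiplicative contraction. Each factor $1-\varepsilon_k\in[0,1]$ is nonnegative, so iterating preserves the inequality direction and gives, for $m>N$,
\[
w_m\le\left(\prod_{k=N}^{m-1}(1-\varepsilon_k)\right)w_N .
\]

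Finally I would let $m\to\infty$. From $1-\varepsilon_k\le e^{-\varepsilon_k}$ we get $\prod_{k=N}^{m-1}(1-\varepsilon_k)\le\exp\left(-\sum_{k=N}^{m-1}\varepsilon_k\right)$, which tends to $0$ because $\sum_n\varepsilon_n=+\infty$ by hypothesis (1). Hence the right-hand side tends to $0$ regardless of the sign of $w_N$, so $\limsup_m w_m\le 0$, that is $\limsup_m u_m\le\epsilon$; letting $\epsilon\to 0$ and using $u_m\ge 0$ forces $u_m\to 0$, and undoing the first-stage reduction completes the proof. The only mildly delicate point is the bookkeeping that separates the two forcing terms, described above; once both are handled, the divergence of $\sum\varepsilon_n$ does the rest, and no genuine obstacle remains.
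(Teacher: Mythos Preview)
Your argument is correct. The first stage---absorbing the summable perturbation via the tail $\sigma_n=\sum_{k\ge n}\delta_k$ and passing to $\tilde u_n=u_n+\sigma_n$, $\tilde r_n=r_n+\sigma_n$---is exactly the reduction the paper performs.

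Where you diverge is in handling the reduced recursion $u_{n+1}\le(1-\varepsilon_n)u_n+r_n\varepsilon_n$. You use the classical shift: subtract $\epsilon\ge\sup_{k\ge N}r_k$ so that the forcing disappears and the recursion becomes a pure product contraction $w_{n+1}\le(1-\varepsilon_n)w_n$, then kill the product via $1-\varepsilon_k\le e^{-\varepsilon_k}$ and the divergence of $\sum\varepsilon_k$. The paper instead integrates the recursion explicitly in the spirit of Duhamel's formula for $u'(t)\le-\varepsilon(t)u(t)+r(t)\varepsilon(t)$: it writes
\[
u_{n+1}\le e^{-\Gamma_n}u_0+e^{-\Gamma_n}\sum_{k=0}^n e^{\Gamma_k}\varepsilon_k r_k,\qquad \Gamma_n=\sum_{k\le n}\varepsilon_k,
\]
splits the sum at an intermediate index $m$, and controls the late part by the telescoping estimate $\varepsilon_k e^{\Gamma_k}\le e\,(e^{\Gamma_k}-e^{\Gamma_{k-1}})$ coming from the mean value theorem; letting $n\to\infty$ and then $m\to\infty$ yields $\limsup u_n\le e\cdot\limsup r_m=0$. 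Your route is more elementary and is essentially the standard argument the paper explicitly set out to replace; the paper's route makes the analogy with the continuous Gronwall/Duhamel picture transparent, at the cost of the extra telescoping trick.
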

\begin{proof}
We give here a new proof of this lemma different from those given in \cite{Ber} and \cite{Xu}. The idea of our proof is inspired by the resolution of the differential inequality of type $%
u^{\prime }(t)\leq -\varepsilon (t)u(t)+r(t)\varepsilon (t).$ Let us first notice that up to replace $u_n$ by $u_n+\sum_{m=n}^{+\infty }\delta_m$ and $r_n$ by $r_n+\sum_{m=n}^{+\infty }\delta_m$, we can assume without loss of generality that $\delta_n=0$ for every $n\in\mathbb{N}$. Now, since $1-\varepsilon _{n}\leq e^{-\varepsilon _{n}},$ we have%
\[
u_{n+1}\leq e^{-\varepsilon _{n}}u_{n}+r_{n}\varepsilon _{n}.
\]%
Then, by induction, we deduce that%
\begin{equation}
u_{n+1}\leq e^{-\Gamma _{n}}u_{0}+e^{-\Gamma _{n}}\sum_{k=0}^{n}e^{\Gamma
_{k}}\varepsilon _{k}r_{k},  \label{IP}
\end{equation}%
where%
\[
\Gamma _{n}=\sum_{k=0}^{n}\varepsilon _{k}.
\]%
Let $0<m<n$ two integers. From (\ref{IP}), we have
\begin{equation}
u_{n+1}\leq e^{-\Gamma _{n}}u_{0}+e^{-\Gamma _{n}}\sum_{k=0}^{m-1}e^{\Gamma
_{k}}\varepsilon _{k}r_{k}+(\sup_{k\geq m}r_{k})e^{-\Gamma
_{n}}\sum_{k=m}^{n}e^{\Gamma _{k}}\varepsilon _{k}  \label{IP2}
\end{equation}%
Let us now notice that for every $k\geq 1,$ we have%
\begin{eqnarray}
\varepsilon _{k}e^{\Gamma _{k}} &=&(\Gamma _{k}-\Gamma _{k-1})e^{\Gamma _{k}}
\nonumber \\
&\leq &e(\Gamma _{k}-\Gamma _{k-1})e^{\Gamma _{k-1}}  \nonumber \\
&\leq &e(e^{\Gamma _{k}}-e^{\Gamma _{k-1}}),  \label{i}
\end{eqnarray}%
where in the last inequality we have used the mean value theorem. Inserting (%
\ref{IP}) into (\ref{IP2}), we obtain%
\[
u_{n+1}\leq e^{-\Gamma _{n}}u_{0}+e^{-\Gamma _{n}}\sum_{k=0}^{m-1}e^{\Gamma
_{k}}\varepsilon _{k}r_{k}+(\sup_{k\geq m}r_{k})e.
\]%
Hence, by letting $n$ then $m$ go to infinity, we get%
\[
\lim \sup_{n\rightarrow +\infty }u_{n}\leq e\lim \sup_{m\rightarrow +\infty
}r_{m},
\]%
which completes the proof of the lemma.
\end{proof}

\section{The weak convergence for the algorithm (GGP)$_{\varepsilon }$ }
In this section, we prove the first assertion of the main theorem concerning
the weak convergence of the algorithm (GGP)$_{\varepsilon }$. The proof
relies essentially on Ployak-Opial's lemma.

\begin{proof}
Set
\begin{equation}
\Phi _{n}(x)=f(x)+\alpha _{n}(\phi (x)-\phi ^{\ast }), \label{KA}
\end{equation}
 where $\phi
^{\ast }=\inf_{Q}\phi .$ Since $x_{n+1}=P_Q(x_n-\gamma_n\nabla\Phi(x_n))$, then according to the variational characterization of
the operator $P_{Q}$
\begin{equation}
\langle x_{n+1}-w,x_{n+1}-x_{n}-\gamma _{n}\nabla \Phi (x_{n})\rangle \leq 0.
\label{H1}
\end{equation}%
for every $w\in Q$. Hence by taking $w=x_{n}$ and using the fact that $\nabla \Phi $ is $L_{n}$-Lipschitz
continuous on $Q$ with $L_{n}=L_{f,Q}+\alpha _{n}L_{\phi ,Q}$, we deduce, thanks to Lemma \ref{Lem4}, that%
\[
(\frac{1}{\gamma _{n}}-\frac{L_{n}}{2})\left\Vert x_{n+1}-x_{n}\right\Vert
^{2}+\Phi _{n}(x_{n+1})-\Phi _{n}(x_{n})\leq 0.
\]%
Since $(\alpha _{n})_{n}$ is decreasing, the last inequality implies%
\begin{equation}
(\frac{1}{\gamma _{n}}-\frac{L_{n}}{2})\left\Vert x_{n+1}-x_{n}\right\Vert
^{2}+\Phi _{n+1}(x_{n+1})-\Phi _{n}(x_{n})\leq 0.  \label{H2}
\end{equation}%
On other hand, since $\lim \sup \gamma _{n}<\frac{2}{L_{f,Q}}$ and $\lim
\alpha _{n}=0,$ there exists $\nu >0$ and an integer $n_{0}$ such
\begin{equation}
\frac{1}{\gamma _{n}}-\frac{L_{n}}{2}\geq \nu \text{ }\forall n\geq n_{0}.
\label{H3}
\end{equation}%
Since we are only concerned with the asymptotic behavior of the sequence $%
(x_{n})_{n},$ we can assume without loss of generality that $n_{0}=1.$
Hence, combining the estimates (\ref{H2}) and (\ref{H3}) yields%
\[
\nu \left\Vert x_{n+1}-x_{n}\right\Vert ^{2}+\Phi _{n+1}(x_{n+1})-\Phi
_{n}(x_{n})\leq 0.
\]%
Therefore the sequence $(\Phi _{n}(x_{n}))_{n}$ is non increasing and
\begin{equation}
\sum_{n=1}^{+\infty }\left\Vert x_{n+1}-x_{n}\right\Vert ^{2}<\infty .
\label{H4}
\end{equation}%
Let $\tilde{x}$ be an arbitrary but fixed element of the set $S_{f,Q}.$
Letting $w=\tilde{x}$ in (\ref{H1}), we get%
\[
\langle x_{n+1}-\tilde{x},x_{n+1}-x_{n}\rangle +\gamma _{n}\langle \nabla
\Phi _{n}(x_{n}),x_{n+1}-x_{n}\rangle +\gamma _{n}\langle \nabla \Phi
_{n}(x_{n}),x_{n}-\tilde{x}\rangle \leq 0.
\]%
Using now the elementary identity%
\[
2\langle a,b\rangle =\left\Vert a\right\Vert ^{2}+\left\Vert b\right\Vert
^{2}-\left\Vert a-b\right\Vert ^{2},
\]%
Lemma \ref{Lem4}, and the fact that $\Phi _{n}$ is a convex function, we easily obtain
\[
\left\Vert x_{n+1}-\tilde{x}\right\Vert ^{2}+2\gamma _{n}\left( \Phi
_{n}(x_{n+1})-\Phi _{n}(\tilde{x})\right) \leq \left\Vert x_{n}-\tilde{x}%
\right\Vert ^{2}+(\gamma _{n}L_{n}-1)\left\Vert x_{n+1}-x_{n}\right\Vert
^{2}.
\]%
This inequality implies%
\begin{equation}
\left\Vert x_{n+1}-\tilde{x}\right\Vert ^{2}+2\gamma _{n}\left( \Phi
_{n+1}(x_{n+1})-f_{Q}^{\ast }\right) \leq \left\Vert x_{n}-\tilde{x}%
\right\Vert ^{2}+\delta _{n},  \label{H5}
\end{equation}%
where
\[
\delta _{n}:=2\gamma _{n}\alpha _{n}(\phi (\tilde{x})-\phi _{Q}^{\ast
})+\gamma _{n}L_{n}\left\Vert x_{n+1}-x_{n}\right\Vert ^{2}.
\]%
From (\ref{H4}) and the assumption $\sum_{n=1}^{+\infty }\gamma _{n}\alpha
_{n}<+\infty ,$ we infer that the series $\sum_{n=1}^{+\infty }\delta _{n}$
is also convergent. Hence, by applying Lemma, we deduce from (\ref{H5}),
that the real sequence $(\left\Vert x_{n+1}-\tilde{x}\right\Vert )_{n}$
converges and
\[
\sum_{n=1}^{+\infty }\gamma _{n}\left( \Phi _{n+1}(x_{n+1})-f_{Q}^{\ast
}\right) <\infty .
\]%
Using now the fact that the sequence $\left( \Phi
_{n+1}(x_{n+1})-f_{Q}^{\ast }\right) _{n}$ is nonnegative and decreasing and
the assumption $\sum_{n=1}^{+\infty }\gamma _{n}=+\infty ,$ we infer that
\[
\lim \Phi _{n+1}(x_{n+1})=f_{Q}^{\ast },
\]%
which implies that
\begin{equation}
\lim f(x_{n})=f_{Q}^{\ast },  \label{H6}
\end{equation}%
thanks to the facts that $(x_{n})$ is bounded and $\alpha _{n}$ converges to
zero. Hence by using the fact that subset $Q$ is weakly
closed and the function $f$ is weakly lower semi-continuity, we deduce that every weak cluster point of the sequence
$(x_{n})$ belongs to the set $S_{f,Q}.$ Therefore Polyak-Opial's
lemma ensures that the sequence $(x_{n})_{n}$ converges weakly towards some element of
$S_{f,Q}.$
\end{proof}
\section{The strong convergence for the algorithm (GGP)$_{\varepsilon }$ }
In this section we prove the main assertion (ii) of Theorem \ref{Th} about the strong convergence of the algorithm (GGP)$_{\varepsilon }$. The main idea of our proof is inspired by \cite{CPS}.
\begin{proof}
Let $\Phi _{n}$ be the function defined by (\ref{KA}) in the previous section. Since
$\Phi _{n}$ is strongly convex, it has a unique minimizer $y_{n}$ over the
set $Q.$ Let $y^{\ast }$ be the unique minimizer of $\phi $ over the closed
and convex subset $S_{f,Q}.$ Let us first prove that $\phi (y_{n})$ converges to $%
\phi (y^{\ast }).$ Since $\Phi _{n}(y_{n})\leq \Phi _{n}(y^{\ast })$ and $%
y^{\ast }$ is a minimizer of $f$ over $Q,$ we have
\[
\phi (y_{n})\leq \phi (y^{\ast }),
\]
which implies that $(y_{n})_{n}$ is bounded in $\mathcal{H}$. Let $%
(y_{n_{k}})_{k}$ be a subsequence of $(y_{n})_{n}$ which converges weakly to
some $\tilde{y}.$ Since $Q$ is weakly closed, $\tilde{y}\in Q.$ On the
other hand, by letting $n_{k}\rightarrow +\infty $ in the inequalities%
\begin{eqnarray}
\Phi _{n_{k}}(y_{n_{k}}) &\leq &\Phi _{n_{k}}(y^{\ast }),  \nonumber \\
\phi (y_{n}) &\leq &\phi (y^{\ast }),  \label{K}
\end{eqnarray}%
and using the weak lower semi-continuity of $f$ and $\phi ,$ we deduce that $%
f(\tilde{y})\leq f(y^{\ast })$ and $\phi (\tilde{y})\leq \phi (y^{\ast }),$
which clearly implies that $\tilde{y}=y^{\ast }.$ Therefore $(y_{n})_{n}$
converges weakly to $y^{\ast }.$ Hence, by using an other time the weak
lower semi continuity of $\phi ,$ we infer that $\phi (y^{\ast })\leq \lim
\inf \phi (y_{n}).$ This inequality combined with (\ref{K}) yields
\begin{equation}
\lim \phi (y_{n})=\phi (y^{\ast }).  \label{K1}
\end{equation}%
Now by proceeding as in the first part of the proof of the assertion (i) of
Theorem \ref{Th}, we obtain
\begin{equation}
\sum_{n=1}^{\infty }\left\Vert x_{n+1}-x_{n}\right\Vert ^{2}<\infty ,
\label{KK}
\end{equation}%
and
\[
\langle x_{n+1}-y^{\ast },x_{n+1}-x_{n}\rangle +\gamma _{n}\langle \nabla
\Phi _{n}(x_{n}),x_{n+1}-x_{n}\rangle +\gamma _{n}\langle \nabla \Phi
_{n}(x_{n}),x_{n}-y^{\ast }\rangle \leq 0.
\]%
Hence by applying Lemma \ref{Lem4} and using the strong convex inequality%
\[
\langle \nabla \Phi _{n}(x_{n}),x_{n}-y^{\ast }\rangle \geq \Phi
_{n}(x_{n})-\Phi _{n}(y^{\ast })+\frac{m\alpha _{n}}{2}\left\Vert
x_{n}-y^{\ast }\right\Vert ^{2},
\]%
where $m>0$ is the strong convexity parameter of the function $\phi ,$ we infer
that
\begin{eqnarray*}
\left\Vert x_{n+1}-y^{\ast }\right\Vert ^{2} &\leq &\left( 1-m\gamma
_{n}\alpha _{n}\right) \left\Vert x_{n}-y^{\ast }\right\Vert ^{2}+\gamma
_{n}L_{n}\left\Vert x_{n+1}-x_{n}\right\Vert ^{2}+2\gamma _{n}\left( \Phi
_{n}(y^{\ast })-\Phi _{n}(x_{n+1})\right)  \\
&\leq &\left( 1-m\gamma _{n}\alpha _{n}\right) \left\Vert x_{n}-y^{\ast
}\right\Vert ^{2}+\gamma _{n}L_{n}\left\Vert x_{n+1}-x_{n}\right\Vert
^{2}+2\gamma _{n}\left( \Phi _{n}(y^{\ast })-\Phi _{n}(y_{n})\right)  \\
&\leq &\left( 1-m\gamma _{n}\alpha _{n}\right) \left\Vert x_{n}-y^{\ast
}\right\Vert ^{2}+\gamma _{n}L_{n}\left\Vert x_{n+1}-x_{n}\right\Vert
^{2}+2\gamma _{n}\alpha _{n}\left( \phi (y^{\ast })-\phi (y_{n})\right) .
\end{eqnarray*}%
Finally by using (\ref{K1}), (\ref{KK}) and the fact that $\left( \gamma
_{n}L_{n}\right) _{n}$ is bounded and applying Lemma \ref{Lem5}, we conclude from the
last inequality that ($x_{n})_{n}$ converges strongly to $y^{\ast }.$ The
proof is complete.
\end{proof}
\par\noindent Conclusion:
\par\noindent In this paper, we have investigated the effect of adding a convex Tikhonov regularizing term $\gamma_n\alpha_n\triangledown\phi$ to the the gradient projection algorithm
\[x_{n+1}=P_{Q}(x_{n}-\gamma _{n}\nabla f(x_{n})).
\]
By following a dynamical approach, we have essentially established that if $\phi$ is strongly convex and the sequence $(\gamma_n\alpha_n)_n$ converges slowly to zero then any  generated sequence $(x_n)_n$ by the modified gradient projection algorithm converges strongly to the unique minimizer of $\phi$ on the set of the minimizers of the objective function $f$ on $Q$.

\end{document}